\tikzstyle{process} = [rectangle,minimum width=3cm, minimum height=1cm,text centered,draw=black,fill=white!10]
\tikzstyle{arrow} = [thick,->,>=stealth]
\begin{document}

\begin{center}
	\textbf{\large Formula for the \boldmath$n$th \boldmath$k$-Generalized Fibonacci-like Number}
	\vspace{2mm}	
	\begin{center}
	\small	John Alexis B. Gemino$^*$\\ Alexander J. Balsomo\\ Geneveve M. Parre\~{n}o-Lachica\\ Dave Ryll B. Libre\\ Marc Raniel A. Nu\~{n}eza\\
		{\it Department of Mathematics, West Visayas State University, Iloilo City 5000, Philippines}\\
		$^*${\tt johnalexis.gemino@wvsu.edu.ph}
	\end{center}
\end{center}

\section*{Abstract}
 
\hspace{7mm} For a linear operator $T$ on $\mathbb{R}^2$ defined by $T(x,y)=(y,x+y),$ the Fibonacci sequence, $F_0=0$, $F_1=1$, and $F_n=F_{n-1}+F_{n-2}$ for $n \geq 2$, can be expressed as $T^n(0,1)=(F_n,F_{n+1})$. With eigenvalues $\lambda_i,\lambda_j$ of $T$ and corresponding eigenvectors $v_i,v_j$, the expression above becomes $T^n(0,1)=a_i \lambda_i^n v_i+ a_j \lambda_j^n v_j,$ and hence used to find $F_n$. In this work, we used the same system to find formula for determining the $n$th term of the Tribonacci and Tetrabonacci sequence, and for the $k$-generalized Fibonacci sequence with $k$ initial terms $0, 0,0,\ldots,1$. For arbitrary initial terms, formula for finding the $n$th Fibonacci-like, Tribonacci-like and Tetrabonacci-like number were also derived in the same fashion. In general, the $n$th $k$-generalized Fibonacci-like number, denoted as $\mathfrak{F}_n^{(k)}$, is given by  
		$$\mathfrak{F}_n^{(k)}=\sum_{m=1}^{k} \left[\displaystyle \frac{\displaystyle\sum_{p=1}^{k}\left(\lambda_m^{k-p}-\displaystyle\sum_{i=1}^{k-p}\lambda_m^{(k-p)-i}\right)t_{p-1}}{\displaystyle \prod_{j=1, \hspace{1mm} j\neq m}^{k}(\lambda_m - \lambda_j)} \right] \lambda_m^n, \hspace{2mm}(n \geq k \geq 2)$$
where $\lambda_1,\ldots,,\lambda_{k-1},\lambda_k$ are the roots of the characteristic polynomial $P(\lambda)=\lambda^k - \lambda^{k-1}-\cdots-\lambda-1$ and $t_0, t_1,\ldots, t_{k-1}$ are the $k$ arbitrary initial terms.

\vspace{5mm}

\textbf{Keywords:} $k$-generalized Fibonacci numbers, $k$-generalized Fibonacci-like numbers

\section*{Introduction}

A sequence is a list of numbers arranged in a specific order.  The numbers in a sequence, separated by commas, are called the terms of the sequence. Some sequences like of arithmetic, geometric, square, cube, Lucas, and Fibonacci are known to many for some time. In discrete mathematics, sequences can be used to represent solutions to certain counting problems. They are also an important data structure in computer science (Rosen, 2012). 

One of the most intriguing number sequences, the Fibonacci sequence, is a sequence starting from the initial terms, 0 and 1, where the succeeding terms are taken from adding the two previous terms (Koshy, 2007). 

The Fibonacci sequence appear in quite unexpected places. They occur in nature, music, geography, and geometry. They can be found in the spiral arrangements of seeds in sunflowers, the scale patterns of pine cones, the number of petals in flowers, and the arrangement of leaves on trees. It is so fascinating that there is an association of Fibonacci enthusiasts founded in 1963, \textit{The Fibonacci Association}, devoted to the study of the sequence. The association publishes \textit{The Fibonacci Quarterly} devoted to articles related to the Fibonacci sequence (Koshy, 2007).

Being one of the widely studied sequences, formula to derive the terms are well-known. One of which is the famous Binet's formula, an explicit formula for finding the $n$th term of the Fibonacci sequence, given by the equation
		\begin{equation}
			f_{n}=\displaystyle \frac{1}{\sqrt{5}}
			\left[\left(\displaystyle \frac{1+\sqrt{5}}{2}\right)^n+\left(\displaystyle \frac{1-\sqrt{5}}{2}\right)^n\right] \notag
		\end{equation}
where $f_n$ is the $n^{th}$ Fibonacci number with inital terms $f_0=0$ and $f_1=1$.

The Fibonacci number $f_{n}$ can also be derived by summing up the elements on the rising diagonal lines in the Pascal's triangle,
\begin{center}
	$1$\\
	$1 \hspace{5mm} 1$\\
	$1 \hspace{5mm} 2 \hspace{5mm} 1$\\
	$1 \hspace{5mm} 3 \hspace{5mm} 3 \hspace{5mm} 1$\\
	$1 \hspace{5mm} 4 \hspace{5mm} 6 \hspace{5mm} 4 \hspace{5mm} 1$\\
	$1 \hspace{5mm} 5 \hspace{5mm} 10 \hspace{5mm} 10 \hspace{5mm} 5 \hspace{5mm} 1.$\\
\end{center}

Many authors also have studied the various generalizations of the Fibonacci sequence. As early as 1975, Wong and Maddocks modified the Pascal's triangle, shown below is the first 6 rows,
\begin{center}
	$1$\\
	$1 \hspace{5mm} 1$\\
	$1 \hspace{5mm} 3 \hspace{5mm} 1$\\
	$1 \hspace{5mm} 5 \hspace{5mm} 5 \hspace{5mm} 1$\\
	$1 \hspace{5mm} 7 \hspace{5mm} 13 \hspace{5mm} 7 \hspace{5mm} 1$\\
	$1 \hspace{5mm} 9 \hspace{5mm} 25 \hspace{5mm} 25 \hspace{5mm} 9 \hspace{5mm} 1$\\
\end{center}
and revealed that sums of elements on the rising diagonal lines in their triangle gave the terms in the Tribonacci sequence, a variation of the Fibonacci sequence, with initial terms 0, 0 and 1, where the succeeding terms are taken from adding the three previous terms.

Natividad and Policarpio (2013), also presented a formula  in determining the $n$th term of the tribonacci-like sequence, a derivative of Tribonacci sequence, where the initial terms in the sequence are arbitrarily assigned. However such formula involves the determination of some tribonacci numbers in the first place. 

On the other hand, Meink (2011) approached the study of Fibonacci Numbers using the theory of diagonalizing a matrix and examining the eigenvalues of 2 x 2 generating matrices to derive the Binet type formula for the Fibonacci and its derivative, the Fibonacci - like sequence, a sequence where the initial terms are arbitrary numbers. Meinke (2011) also extended these results to tribonacci and tribonacci-like numbers with a similar $3\times 3$ matrix approach.


Furthermore, many authors also derived a formula for the $n$th term of the $k$-generalized Fibonacci sequence, a generalization for the Fibonacci sequence with $k$ initial terms, composed of $(k-1)$ 0's and 1, where the succeeding terms are generated by adding the previous $k$ terms. In 2017, Kuhapatanakul and Anantakitpaisal introduced the $k$-generalized Pascal's triangle in the form, 
	\begin{center}
		$C_k\binom{0}{0}$ \vspace{2mm}\\
		$C_k\binom{1}{0} \hspace{10mm} C_k\binom{1}{1}$\vspace{2mm}\\
		$C_k\binom{2}{0} \hspace{10mm} C_k\binom{2}{1} \hspace{10mm} C_k\binom{2}{2}$\vspace{2mm}\\
		$C_k\binom{3}{0} \hspace{10mm} C_k\binom{3}{1} \hspace{10mm} C_k\binom{3}{2} \hspace{10mm} C_k\binom{3}{3}$\vspace{2mm}\\
		$\vdots \hspace{30mm} \vdots$	
	\end{center}
with the definition: let $n, i$ be integers with $n\geq 0$, and define
	\begin{equation}
		C_k (n,i)=C_k(n-1,i)+ \sum_{j=1}^{k-1}C_k(n-j,i-1) \notag
	\end{equation}
where $C_k(n,0)=C_k(n,n)=1$, and $C_k(n,i)=0 $ for $ i<0 $ or $i>n$. They presented that the sums of elements on each rising diagonal line in the $k$-generalized Pascal's triangle gives the $k$-generalized Fibonacci number.

Dresden (2014) provided a Binet-style formula, a new representation, of the $n$th $k$-generalized Fibonacci number $f^{(k)}_n$ given by the equation
			\begin{equation}
				f^{(k)}_n = \displaystyle \sum_{i=1}^{k} \dfrac{\alpha_i - 1}{2+(k+1)(\alpha_i - 2)} \alpha_i^{n-1} \notag
			\end{equation}
where $\alpha_1, \ldots, \alpha_k$ are the roots of the polynomial function $P(x)=x^k-x^{k-1}-\ldots-1$.

Bacani and Rabago (2015) incorporated the above equation by Dresden (2014) in deriving the formula for the $n$th term, $G_n^{(k)}$, of the k-generalized Fibonacci-like sequence, a derivative of the k-generalized Fibonacci sequence having $k$ arbitrary initial terms. $G_n^{(k)}$ is given as follows: 
	\begin{equation}
		G_n^{(k)} = G_0^{(k)} \displaystyle \sum_{i=1}^{k}A(i;k)\alpha_i^{n-2}+ \displaystyle \sum_{m=0}^{k-3}\left( G_{m+1}^{(k)} \displaystyle \sum_{j=0}^{m+1} \displaystyle \sum_{i=1}^{k} A(i;k)\alpha_i^{n-2-j} \right) + G_{k-1}^{(k)} \displaystyle \sum_{i=1}^{k}A(i;k)\alpha_i^{n-1} \notag
	\end{equation}
where $A(i;k)=(\alpha_i-1)[2+(k+1)(\alpha_i - 2)]^{-1}$ and $\alpha_1, \alpha_2, \ldots, \alpha_k$ are roots of $P(x)=x^k-x^{k-1}- \cdots -1$ and $G_n^{(k)}$, for $n=0,1,\dots,k-1$, are the $k$ arbitrary initial terms.\vspace{2mm}
   
This work aims to contribute a formula for the $n$th $k$-generalized Fibonacci-like number with $k$-arbitrary initial terms, using matrices but without involving the diagonalization of a matrix.  

A square matrix $M$ of a linear operator $T$ on a complex vector space $V$, defined to generate a desired sequence, is diagonalizable if there exist an invertible matrix $S$, s.t.
$D=S^{-1}MS$ is a diagonal matrix. But such process, requires finding the determinant of $S$, and the classical adjoint of $S$ to deduce $S^{-1}$, not to consider the matrix multiplication that comes after. On the other hand, the basis for the formula of Bacani and Rabago (2015) for $k$-generalized Fibonacci-like numbers uses $k$-generalized Fibonacci numbers. 

The derivation of the formula for the $n$th $k$-generalized Fibonacci-like number was the generalization of finding the eigenvalues and eigenvectors of the operator
				\begin{equation}
					T^n(0,1)=(F_n,F_{n+1}), \notag
				\end{equation}
and to derive the Binet's formula for the nth Fibonacci number $(F_n)$ using the eigenvectors of $T$.  



\pagebreak 
\section*{Methods}

This work focuses on deriving a formula for the $n$th $k$-generalized Fibonacci-like number.  To come-up with a formula for the said number, we commence by defining linear operators $T_k$, for $k=2,3,4$ on $\mathbb{R}^2$, $\mathbb{C}^3$, and $\mathbb{C}^4$, associated with the Fibonacci, Tribonacci and Tetrabonacci sequence, respectively- the simpler cases of the goal of this paper. The matrix $M(T_k)$ corresponding to the operator $T_k$ is then identified. 

The initial terms for the mentioned sequences are used as the $k$-tupled vector $(0,0,\ldots,1)$ to show that
		\begin{equation}
			T^n_k (0,0, \ldots, 1) = (F^k_n, F^k_{n+1}, \ldots, F^k_{n+(k-1)}).
		\end{equation}
where $F^k_n$, for $k=2,3,4$, is the nth term of the Fibonacci, Tribonacci and Tetrabonacci sequence, respectively. 

Also, the eigenvalues and corresponding eigenvectors for each $T_k$, for $k=2,3,4$ are determined. The vector form of the initial terms of the Fibonacci, Tribonacci and Tetrabonacci sequence, are then written as linear combinations  using the eigenvectors as basis vectors. The following equation will be shown,
		\begin{equation}
			T^n_k (0,0, \ldots, 1) = a_1 \lambda_1^n v_1 + a_2 \lambda_2^n v_2 + \ldots + a_k \lambda_k^n v_k.
		\end{equation}
where the $a_k$'s are scalar values.

The values of $a_k$s are identified by evaluating (2) at $n=0$. With (1) and (2), we deduce that
	\begin{equation}
		(F^k_n, F^k_{n+1}, \ldots, F^k_{n+(k-1)}) =  a_1 \lambda_1^n v_1 + a_2 \lambda_2^n v_2 + \ldots + a_k \lambda_k^n v_k,
	\end{equation}
 for $k=2,3,4$. 
 
 A formula for the $n$th term of the Fibonacci, Tribonacci, and Tetrabonacci sequence are found using the first vector components of (3), for $k=2,3,4$, respectively. Patterns are then recognized and used to derive the formula for the $n$th term of $k$-generalized Fibonacci sequence.  

Using arbitrary initial terms, $t_0, t_1, \ldots,t_k-1$, as the $k$-tupled vector $(t_0, t_1, ..., t_{k-2},t_k-1)$ in (1) and (2), and applying the same process, we derive the nth term of the Fibonacci-like, Tribonacci-like, and Tetrabonacci sequence, denoted by $\mathfrak{F}^k_n$, for $k=2,3,4$, respectively. Patterns are also recognized and used to derive the formula for the $n$th term of $k$-generalized Fibonacci-like sequence.

\pagebreak
\section*{Results and Discussion}

The following are the formulas derived for the $n$th term of Fibonacci, Tribonacci and Tetrabonacci sequence.

\begin{itemize}
	\item  The $n$th Fibonacci number with initials terms, $F_0^{(2)}=0,F_1^{(2)}=1$, is given by
				\begin{equation}
					F_n^{(2)}=	\displaystyle \frac{\lambda_1^n}{\lambda_1 - \lambda_2}+ \displaystyle \frac{\lambda_2^n}{\lambda_2 - \lambda_1}, 
				\end{equation}
			where $\lambda_1,\lambda_2$ are the roots of the polynomial $P_1(\lambda)=\lambda^2-\lambda-1$.
			
	\item The $n$th Tribonacci number with initial terms, $F_0^{(3)}=F_1^{(3)}=0, F_{2}^{(3)}=1$, is given by
				\begin{equation}
					F_n^{(3)}=
					\displaystyle \frac{\lambda_1^n}{(\lambda_1 - \lambda_2)(\lambda_1 - \lambda_3)}+
					\displaystyle \frac{\lambda_2^n}{(\lambda_2 - \lambda_1)(\lambda_2 - \lambda_3)}+	\displaystyle \frac{\lambda_3^n}{(\lambda_3 - \lambda_1)(\lambda_3 - \lambda_2)}
				\end{equation}
			where $\lambda_1,\lambda_2,\lambda_3$ are the roots of the polynomial $P_2(\lambda)=\lambda^3-\lambda^2-\lambda-1$.
	
	\item The $n$th Tetrabonacci number with initial terms,	$F_0^{(4)}=F_1^{(4)}=F_2^{(4)}=0, F_3^{(4)}=1$, is given by	
					\begin{eqnarray}
						F_n^{(4)}&=&
						\displaystyle \frac{\lambda_1^n}{(\lambda_1 - \lambda_2)(\lambda_1 - 		
							\lambda_3)(\lambda_1 - \lambda_4)}+
						\displaystyle \frac{\lambda_2^n}{(\lambda_2 - \lambda_1)(\lambda_2 - 
							\lambda_3)(\lambda_2 - \lambda_4)}\notag\\
						&+&
						\displaystyle \frac{\lambda_3^n}{(\lambda_1 - \lambda_2)(\lambda_1 - 
							\lambda_3)(\lambda_1 - \lambda_3)}+
						\displaystyle \frac{\lambda_4^n}{(\lambda_1 - \lambda_2)(\lambda_1 - 
							\lambda_3)(\lambda_1 - \lambda_3)},
					\end{eqnarray}
		where $\lambda_1,\lambda_2,\lambda_3,\lambda_4$ are the roots of the polynomial $P_3(\lambda)=\lambda^4-\lambda^3-\lambda^2-\lambda-1$.
\end{itemize}

Each of the formula above were rewritten in summation notation and summarized in the table below to find a recognizable pattern.

{\footnotesize $$\begin{array}{c c c}
		\hline \\ 
	\textbf{Name of Sequence} 			&\textbf{Initial Terms}		&  \textbf{nth Term of the Sequence} \\[7pt] 
	\text{Fibonacci Sequence}			&	0,1						&  F_n^{(2)}=\displaystyle \sum_{m=1}^{2} \frac{\lambda_m^n}{\displaystyle\prod_{j=1,\hspace{1mm} j\neq m}^{2}(\lambda_m-\lambda_j)}, \hspace{2mm} (n\geq 2)	 \\
	\text{Tribonacci Sequence}			&	0,0,1					& F_n^{(3)}=\displaystyle \sum_{m=1}^{3} \frac{\lambda_m^n}{\displaystyle\prod_{j=1,\hspace{1mm} j\neq m}^{3}(\lambda_m-\lambda_j)}, \hspace{2mm} (n\geq 3)	\\
	\text{Tetrabonacci Sequence}		&	0,0,0,1					&  F_n^{(4)}=\displaystyle \sum_{m=1}^{4} \frac{\lambda_m^n}{\displaystyle\prod_{j=1,\hspace{1mm} j\neq m}^{4}(\lambda_m-\lambda_j)}, \hspace{2mm} (n\geq 4) \\
			\vdots						& 	\vdots 					& 	\vdots  \\
	\text{$k$-generalized Fibonacci Sequence} & 0, 0, \ldots,0, 1  &	F_n^{(k)}=\displaystyle \sum_{m=1}^{k} \frac{\lambda_m^n}{\displaystyle\prod_{j=1,\hspace{1mm} j\neq m}^{k}(\lambda_m-\lambda_j)}, \hspace{2mm} (n\geq k \geq 2) \\[35pt] \hline 
\end{array}$$}

By observing how the pattern goes, we propose the following theorem.

\newtheorem{theorem}{Theorem}
\begin{theorem}
The $nth$ $k$-generalized Fibonacci number, denoted by $F_n^{(k)}$, with $k$ initial terms, $F_0^{(k)}=F_1^{(k)}=...=F_{k-2}^{(k)}=0$ and $F_{k-1}^{(k)}=1$, is
		\begin{equation}
			F_n^{(k)}=\sum_{m=1}^{k}\displaystyle\frac{\lambda_m^n}{\displaystyle\prod_{j=1,\hspace{1mm} j\neq m}^{k}(\lambda_m-\lambda_j)}, \hspace{2mm} (n\geq k \geq 2) 
		\end{equation}
where $\lambda_1, \lambda_2,\ldots,\lambda_{k-1},\lambda_k$ are the roots of the characteristic polynomial $P(\lambda)=\lambda^k - \lambda^{k-1}- \ldots -\lambda-1$.	
\end{theorem}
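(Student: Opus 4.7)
The plan is to generalize the eigenvalue/eigenvector computation sketched for $k=2,3,4$ in the Methods section to arbitrary $k$. First, I would introduce the linear operator $T_k$ on $\mathbb{C}^k$ whose matrix in the standard basis is the companion matrix
\begin{equation*}
M(T_k)=\begin{pmatrix} 0 & 1 & 0 & \cdots & 0 \\ 0 & 0 & 1 & \cdots & 0 \\ \vdots & \vdots & \vdots & \ddots & \vdots \\ 0 & 0 & 0 & \cdots & 1 \\ 1 & 1 & 1 & \cdots & 1 \end{pmatrix},
\end{equation*}
so that $T_k(x_0,\ldots,x_{k-1})=(x_1,\ldots,x_{k-1},x_0+x_1+\cdots+x_{k-1})$. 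A short induction on $n$ using the recurrence $F_{n+k}^{(k)}=F_n^{(k)}+F_{n+1}^{(k)}+\cdots+F_{n+k-1}^{(k)}$ then gives $T_k^n(0,\ldots,0,1)=(F_n^{(k)},F_{n+1}^{(k)},\ldots,F_{n+k-1}^{(k)})$, with the base case supplied by the initial conditions. The characteristic polynomial of $M(T_k)$ is exactly $P(\lambda)$ by the standard companion-matrix formula, and for each root $\lambda_m$ one checks that $v_m:=(1,\lambda_m,\lambda_m^2,\ldots,\lambda_m^{k-1})$ is an eigenvector with eigenvalue $\lambda_m$: all but the last coordinate of $M(T_k)v_m=\lambda_m v_m$ are automatic, and the last reduces to $1+\lambda_m+\cdots+\lambda_m^{k-1}=\lambda_m^k$, which is $P(\lambda_m)=0$.

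With the eigenpairs in hand, I would decompose $e_k:=(0,\ldots,0,1)=\sum_{m=1}^k a_m v_m$. Writing out coordinates gives a Vandermonde system; rather than invert the matrix by brute force, I would invoke Lagrange interpolation. Since the $\lambda_m$ are distinct, the polynomials $\ell_m(x):=\prod_{j\neq m}(x-\lambda_j)/\prod_{j\neq m}(\lambda_m-\lambda_j)$ satisfy $\ell_m(\lambda_{m'})=\delta_{mm'}$, so for each $i\in\{0,1,\ldots,k-1\}$ the polynomial identity $x^i=\sum_m \lambda_m^i\,\ell_m(x)$ is exact. Comparing the coefficient of $x^{k-1}$ on both sides yields $\sum_m \lambda_m^i/\prod_{j\neq m}(\lambda_m-\lambda_j)=\delta_{i,k-1}$, which is precisely the set of $k$ equations that force $a_m=1/\prod_{j\neq m}(\lambda_m-\lambda_j)$. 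Applying $T_k^n$ to both sides of $e_k=\sum_m a_m v_m$ and reading off the first coordinate (where each $v_m$ contributes $1$) yields $F_n^{(k)}=\sum_{m=1}^k a_m \lambda_m^n$, which is the claimed formula.

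The main obstacle is the implicit assumption that the roots $\lambda_1,\ldots,\lambda_k$ of $P$ are all distinct, which both the eigenbasis decomposition and the Lagrange/Vandermonde step silently use. This is a known but nontrivial property of the $k$-generalized Fibonacci recurrence; a self-contained justification multiplies through by $\lambda-1$ to obtain $Q(\lambda):=(\lambda-1)P(\lambda)=\lambda^{k+1}-2\lambda^k+1$, notes that $P(1)=-(k-1)\neq 0$ for $k\geq 2$, and checks that $\gcd(Q,Q')=1$ (the only candidate common roots of $Q$ and $Q'=\lambda^{k-1}[(k+1)\lambda-2k]$ are $0$ and $2k/(k+1)$, and neither annihilates $Q$). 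Once distinctness is in hand, the remaining steps are routine linear-algebra bookkeeping, and the pattern-based summary table in the Results section becomes a genuine proof for all $k\geq 2$.
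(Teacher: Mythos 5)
Your proof is correct, but it takes a genuinely different route from the paper's. The paper does \emph{not} carry out the eigenvector derivation for general $k$; instead it proves the theorem by showing the proposed formula agrees with Dresden's Binet-style formula, which reduces to the product identity $\prod_{j\neq m}(\lambda_m-\lambda_j)=[(k+1)\lambda_m-2k]\lambda_m^{k-1}/(\lambda_m-1)$, established there by a lengthy expansion in elementary symmetric polynomials with several telescoping steps. You instead make the Methods-section heuristic rigorous for arbitrary $k$: companion matrix, eigenpairs $v_m=(1,\lambda_m,\ldots,\lambda_m^{k-1})$, and the Lagrange-interpolation identity $\sum_m\lambda_m^i/\prod_{j\neq m}(\lambda_m-\lambda_j)=\delta_{i,k-1}$ to solve the Vandermonde system. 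Your approach is self-contained (no reliance on Dresden's result as a black box) and correctly isolates the one hypothesis both arguments silently need, namely that the roots of $P$ are simple. A pleasant bonus you could make explicit: since $Q(\lambda)=(\lambda-1)\prod_{j=1}^k(\lambda-\lambda_j)$, differentiating and evaluating at $\lambda_m$ gives $Q'(\lambda_m)=(\lambda_m-1)\prod_{j\neq m}(\lambda_m-\lambda_j)$, and with $Q'(\lambda)=\lambda^{k-1}[(k+1)\lambda-2k]$ this yields the paper's identity (10) in one line --- so your distinctness computation already contains a much shorter proof of the paper's central lemma. Two small points to tighten: state explicitly that uniqueness of the $a_m$ follows from invertibility of the Vandermonde matrix (your identities only exhibit a solution), and supply the short estimate showing $Q(2k/(k+1))=1-\frac{2}{k+1}\left(\frac{2k}{k+1}\right)^k\neq 0$ for $k\geq 2$ rather than leaving it as ``one checks.''
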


\begin{proof}
We need to show that our formula and Dresden's $(2014)$ presentation of the $nth$ $k$-generalized Fibonacci number, 
$$\mathbf{F_n^{(k)}}= \sum_{m=1}^{k} \displaystyle \frac{\lambda_m - 1}{2+(k+1)(\lambda_m - 2)}\lambda_m^{n-1},$$
with initial conditions $F_{-(k-2)}=F_{-(k-1)}=\ldots=F_0=0$ and $F_1=1$ are the same, that is, we need to show the equality
		\begin{equation}
			\mathbf{F_n^{(k)}}=F_{n+k-2}^{(k)}.
		\end{equation}
So,
	\begin{equation}
		\sum_{m=1}^{k} \displaystyle \frac{\lambda_m - 1}{2+(k+1)(\lambda_m - 2k)}\lambda_m^{n-1}
		\newcommand{\?}{\stackrel{?}{=}} \?
		\sum_{m=1}^{k}\displaystyle\frac{\lambda_m^{n+k-2}}{\displaystyle\prod_{j=1,\hspace{1mm} j\neq m}^{k}(\lambda_m-\lambda_j)}
	\end{equation}
Manipulating the left hand side (LHS) of the equation above,
	\begin{equation}
		\sum_{m=1}^{k} \displaystyle \frac{\lambda_m - 1}{[(k+1)\lambda_m - 2)]\lambda_m^{k-1}}(\lambda_m^{n+k-2})
	 	 \newcommand{\?}{\stackrel{?}{=}} \? \sum_{m=1}^{k}\displaystyle\frac{\lambda_m^{n+k-2}}{\displaystyle\prod_{j=1,\hspace{1mm} j\neq m}^{k}(\lambda_m-\lambda_j)}. \notag
	\end{equation}
Thus, for (8) to be true, we need to show the following product identity
	\begin{equation}
		\frac{[(k+1)\lambda_m - 2k]\lambda_m^{k-1}}{\lambda_m - 1} = \prod_{j=1, \hspace{2mm} j \neq m}^{k}(\lambda_m - \lambda_j).
	\end{equation}
Without loss of generality, let $m=1$, and let the right hand side (RHS) of (10) be $A$, so
		\begin{equation}
			A = \prod_{j=2, \hspace{2mm} j \neq 1}^{k}(\lambda_1 - \lambda_j) = (\lambda_1 - \lambda_2)(\lambda_1 - \lambda_3) \ldots (\lambda_1 - \lambda_{k-1})(\lambda_1 - \lambda_k)
		\end{equation}
By expansion of the linear factorization of the RHS of $A$,
\begin{eqnarray}
	A &=&  \lambda_1^{k-1}-e_1(\lambda_2,\lambda_3,\ldots,\lambda_k)\lambda_1^{k-2}+e_2(\lambda_2,\lambda_3,\ldots,\lambda_k)\lambda_1^{k-3}	\notag\\
		&& -e_3(\lambda_2,\lambda_3,\ldots,\lambda_k)\lambda_1^{k-4} + \ldots + (-1)^{k-1} e_{k-1}(\lambda_2,\lambda_3,\ldots,\lambda_k)
\end{eqnarray}
where, 
		\begin{eqnarray}
			e_1(\lambda_2,\lambda_3,\ldots,\lambda_k) &=& \lambda_2 + \lambda_3 + \ldots + \lambda_{k-1} + \lambda_k \notag\\
			e_2(\lambda_2,\lambda_3,\ldots,\lambda_k) &=& \lambda_2\lambda_3 + \lambda_2\lambda_4 +\ldots\lambda_2\lambda_k+\lambda_3\lambda_4+\ldots+\lambda_3\lambda_k + \ldots + \lambda_{k-1} + \lambda_k \notag\\
			e_3(\lambda_2,\lambda_3,\ldots,\lambda_k) &=& \lambda_2\lambda_3\lambda_4 + \lambda_2\lambda_3\lambda_5 +\ldots + \lambda_{k-2}\lambda_{k-1}\lambda_k \notag\\
			\vdots \hspace{5mm}&\vdots& \hspace{5mm}\vdots \notag\\
			e_{k-2}(\lambda_2,\lambda_3,\ldots,\lambda_k) &=& \lambda_2\lambda_3\ldots\lambda_{k-1}+\lambda_2\lambda_3\ldots\lambda_{k-2}\lambda_k
			+\ldots + \lambda_2\lambda_4\ldots\lambda_{k-1}\lambda_k+\lambda_3\lambda_4\ldots\lambda_{k-1}\lambda_k \notag\\
			e_{k-1}(\lambda_2,\lambda_3,\ldots,\lambda_k) &=& \lambda_2\lambda_3\ldots\lambda_k \notag
		\end{eqnarray}
Multiplying both sides of $(12)$ by $\lambda_1$, we have
	\begin{eqnarray}
		\lambda_1 A &=&  		
		\lambda_1^k-e_1(\lambda_2,\lambda_3,\ldots,\lambda_k)\lambda_1^{k-1}+
		e_2(\lambda_2,\lambda_3,\ldots,\lambda_k)\lambda_1^{k-2} \\
		&&+\ldots+
		(-1)^{k-1} e_{k-1}(\lambda_2,\lambda_3,\ldots,\lambda_k)\lambda_1. \notag
	\end{eqnarray}
Subtracting $(12)$ from $(13)$, 
	\begin{eqnarray}
		(\lambda_1 -1 )A &=&  		
		\lambda_1^k-\lambda_1^{k-1} -  e_1(\lambda_2,\lambda_3,\ldots,\lambda_k)(\lambda_1^{k-1}-\lambda_1^{k-2}) + e_2(\lambda_2,\lambda_3,\ldots,\lambda_k)(\lambda_1^{k-2}-\lambda_1^{k-3}) \notag\\
		&& +\ldots+
		(-1)^{k-1} e_{k-1}(\lambda_2,\lambda_3,\ldots,\lambda_k)(\lambda_1-1)
	\end{eqnarray}
Multiplying both sides by $\lambda^2_1$ we have,
		\begin{eqnarray}
			\lambda_1^2(\lambda_1 -1 )A &=& \lambda_1^{k+2}-\lambda_1^{k+1} -  e_1(\lambda_2,\lambda_3,\ldots,\lambda_k)(\lambda_1^{k+1}-\lambda_1^k)+
			e_2(\lambda_2,\lambda_3,\ldots,\lambda_k)(\lambda_1^{k}-\lambda_1^{k-1}) \notag\\
			&& +\ldots+
			(-1)^{k-1} e_{k-1}(\lambda_2,\lambda_3,\ldots,\lambda_k)(\lambda_1^3-\lambda_1^2) \notag\\
			&=&\lambda_1^k [\lambda_1^2-\lambda_1 -  e_1(\lambda_2,\lambda_3,\ldots,\lambda_k)(\lambda_1 - 1)+
			e_2(\lambda_2,\lambda_3,\ldots,\lambda_k)(1-\lambda_1^{-1}) \notag\\
			&& +\ldots+ (-1)^{k-1} e_{k-1}(\lambda_2,\lambda_3,\ldots,\lambda_k)(\lambda_1^{-(k-3)}-\lambda_1^{-(k-2)})].
		\end{eqnarray}
Let the expression enclosed with brackets on the RHS of $(15)$ be $A_1$, so 
\begin{eqnarray}
	A_1 &=& \lambda_1^2-\lambda_1 -  e_1(\lambda_2,\lambda_3,\ldots,\lambda_k)(\lambda_1 - 1)+
	e_2(\lambda_2,\lambda_3,\ldots,\lambda_k)(1-\lambda_1^{-1}) \notag\\
	&& +\ldots+ (-1)^{k-1} e_{k-1}(\lambda_2,\lambda_3,\ldots,\lambda_k)(\lambda_1^{k-3}-\lambda_1^{k-2}) \notag\\
	&=& (\lambda_1^2-\lambda_1)-(1 - \lambda_1)(\lambda_1 - 1) + (-1 - [\lambda_1 \lambda_2 + \lambda_1 \lambda_3 + \ldots + \lambda_1 \lambda_k])(1-\lambda_1^{-1})\notag\\
	&& + \ldots + (-1)^{k-1}(-1)^k \frac{(-1)}{\lambda_1}(\lambda_1^{-(k-3)}-\lambda_1^{-(k-2)}) \notag\\
	&=& (\lambda_1^2-\lambda_1)+(\lambda_1 - 1)(\lambda_1 - 1) + (\lambda_1^2 - \lambda_1 - 1)(1-\lambda_1^{-1})\notag\\
	&& + \ldots + \frac{1}{\lambda_1}(\lambda_1^{-(k-3)}-\lambda_1^{-(k-2)}). \notag
\end{eqnarray}
$A_1$ can also be be expressed as follows:
\begin{eqnarray}
	\lambda_1 (\lambda_1 - 1) &+& \notag\\
	(\lambda_1 - 1)(\lambda_1 - 1)  &+& \notag\\
	\left(\lambda_1 - 1 - \frac{1}{\lambda_1}\right)(\lambda_1 - 1)	&+& \notag\\
	\vdots \hspace{14mm} && \notag\\
	\left( \lambda_1 - 1 - \frac{1}{\lambda_1} - \ldots  - \frac{1}{\lambda_1^{k-2}} \right)(\lambda_1 - 1). && \notag
\end{eqnarray}
Factoring $(\lambda_1 - 1)$ out, we have 
\begin{eqnarray}
	A_1 &=& (\lambda_1 - 1) \left[ \lambda_1 + (\lambda_1 - 1) + \left(\lambda_1 - 1 - \frac{1}{\lambda_1}\right) + \ldots +  \left(\lambda_1 - 1 - \frac{1}{\lambda_1} - \ldots  - \frac{1}{\lambda_1^{k-2}}\right) \right] \notag\\
	A_1 &=& (\lambda_1 - 1) \left[ k\lambda_1 - (k-1) - (k-2)\frac{1}{\lambda_1} - \ldots - \frac{2}{\lambda_1^{k-3}} - \frac{1}{\lambda_1^{k-2}}\right].
\end{eqnarray}
Now, let $A_2$ the expression enclosed with brackets on the RHS of $A_1$, hence, 
\begin{eqnarray}
	A_2 &=&  k\lambda_1 - (k-1) - (k-2)\frac{1}{\lambda_1} - \ldots - \frac{2}{\lambda_1^{k-3}} - \frac{1}{\lambda_1^{k-2}} \\
	\lambda_1 A_2 &=& k^2 \lambda_1 - (k-1)\lambda_1 - (k-2 ) - \ldots - \frac{2}{\lambda_1^{k-4}} - \frac{1}{\lambda_1^{k-3}}.
\end{eqnarray}
Subtracting $(17)$ from $(18)$,
\begin{eqnarray}
	(\lambda_1 - 1) A_2 &=& k^2 \lambda_1 - (k-1)\lambda_1 - k \lambda_1 + \left( 1 + \frac{1}{\lambda_1} + \frac{1}{\lambda_1^2}+\ldots+\frac{1}{\lambda_1^{k-3}} \right) + \frac{1}{\lambda_1^{k-2}}  \notag\\
	&=& k \lambda_1^2 - k \lambda_1 + \lambda_1^2 - k \lambda_1 + 1 + \frac{1}{\lambda_1} + \frac{1}{\lambda_1^2}+\ldots+\frac{1}{\lambda_1^{k-3}} + \frac{1}{\lambda_1^{k-2}} \notag\\ 
	&=& k \lambda_1^2 - 2k \lambda_1 + \left[ \lambda_1 + 1 + \frac{1}{\lambda_1} + \frac{1}{\lambda_1^2}+\ldots+\frac{1}{\lambda_1^{k-3}}+ \frac{1}{\lambda_1^{k-2}} \right]\notag\\
	&=& k \lambda_1^2 - 2k \lambda_1 + \frac{\lambda_1 \left[1 - \left( \frac{1}{\lambda_1}\right)^k\right] }{1 - \frac{1}{\lambda_1}} \notag\\
	&=& k \lambda_1^2 - 2k \lambda_1 + \lambda_1 \left(1 - \frac{1}{\lambda_1^k} \right)\left( \frac{\lambda_1}{\lambda_1 - 1} \right) \notag\\
	&=& k \lambda_1^2 - 2k \lambda_1 + \left( \frac{\lambda_1^2}{\lambda_1 - 1} \right)\left( \frac{\lambda_1^k - 1}{\lambda_1^k} \right) \notag\\
	&=& k \lambda_1^2 - 2k \lambda_1 + \left( \frac{\lambda_1^2}{\lambda_1 - 1} \right)(\lambda_1^k - 1)[-(\lambda_1 - 2)] \notag\\
	&=& k \lambda_1^2 - 2k \lambda_1 + \frac{(\lambda_1^2)(-\lambda_1^{k+1}+ 2\lambda_1^k + \lambda_1 - 2)}{\lambda_1 - 1} \notag\\
	&=& k \lambda_1^2 - 2k \lambda_1 + \frac{(-\lambda_1^2)(\lambda_1^{k-1}- 2\lambda_1^k + 1 + 1 - \lambda_1)}{\lambda_1 - 1} \notag\\
	&=& k \lambda_1^2 - 2k \lambda_1 + (-\lambda_1^2)\left[ \frac{\lambda_1^{k+1}-2k\lambda_1^k + 1}{\lambda_1 - 1} +
	\frac{1 - \lambda_1}{\lambda_1 - 1}\right] \notag\\
	&=& k \lambda_1^2 - 2k \lambda_1 + (-\lambda_1^2)\left[ 0 -
	\frac{\lambda_1 - 1}{\lambda_1 - 1}\right] \notag\\
	&=& k \lambda_1^2 - 2k \lambda_1 + \lambda_1^2 \notag\\
	A_2 &=& \frac{(k+1)\lambda_1^2 - 2k \lambda_1}{\lambda_1 - 1}. \notag
\end{eqnarray}
Hence,
\begin{eqnarray}
	A_1 &=& (\lambda_1 - 1)A_2 \notag\\
	 &=& (\lambda_1 - 1) \frac{(k+1)\lambda_1^2 - 2k \lambda_1}{\lambda_1 - 1} \notag\\
	 &=& (k+1)\lambda_1^2 - 2k \lambda_1.
\end{eqnarray}
And,
\begin{eqnarray}
	\lambda_1^2(\lambda_1 -1 )A &=& \lambda^k A_1 \notag\\
	A &=& \frac{\lambda_1^k[(k+1)\lambda_1^2 - 2k \lambda_1]}{\lambda_1^2(\lambda_1 -1 )} \notag\\
	A &=& \frac{[(k+1)\lambda_1 - 2k]\lambda_1^{k-1}}{\lambda_1 -1}
\end{eqnarray}
Hence, for any $m$, $1 \leq m \leq k$,  
$$\frac{[(k+1)\lambda_m - 2k]\lambda_m^{k-1}}{\lambda_m - 1} = \prod_{j=1, \hspace{2mm} j \neq m}^{k}(\lambda_m - \lambda_j)$$
Therefore, our formula is the same with Dresden's formula for the $nth$ $k$-generalized Fibonacci number.
\end{proof}


The following are the formulas derived for the nth term of Fibonacci-like, Tribonacci-like and Tetrabonacci-like sequence.

\begin{itemize}
	\item The $nth$ Fibonacci-like number with initial terms $t_0$, $t_1$ 
				\begin{equation}
					\mathfrak{F}_n^{(2)}=
					\left[\displaystyle \frac{(\lambda_1 - 1)t_0 + t_1}
					{\lambda_1 - \lambda_2}\right]\lambda_1^n +
					\left[\displaystyle \frac{(\lambda_2 - 1)t_0 + t_1}
					{\lambda_2 - \lambda_1}\right]\lambda_2^n
				\end{equation}
		where $\lambda_1,\lambda_2$ are the roots of the polynomial $P_3(\lambda)=\lambda^2-\lambda-1$.
	\item The $nth$ Tribonacci-like number with initial terms $t_0$, $t_1$, $t_2$ is given by 
				\begin{eqnarray}
					\mathfrak{F}_n^{(3)}&=&
					\left[\displaystyle \frac{(\lambda_1^2 - \lambda_1 - 1)t_0 + (\lambda_1 - 1)t_1 + t_2}{(\lambda_1 - \lambda_2)(\lambda_1 - \lambda_3)}\right]\lambda_1^n \notag\\
					&+&
					\left[\displaystyle \frac{(\lambda_2^2 - \lambda_2 - 1)t_0 + (\lambda_2 - 1)t_1 + t_2}{(\lambda_2 - \lambda_1)(\lambda_2 - \lambda_3)}\right]\lambda_2^n \notag\\
					&+&
					\left[\displaystyle \frac{(\lambda_3^2 - \lambda_3 - 1)t_0 + (\lambda_3 - 1)t_1 + t_2}{(\lambda_3 - \lambda_1)(\lambda_3 - \lambda_2)}\right]\lambda_3^n \notag
				\end{eqnarray}
			where $\lambda_1,\lambda_2,\lambda_3$ are the roots of the polynomial $P_3(\lambda)=\lambda^3-\lambda^2-\lambda-1$.
	\item The $nth$ Tetrabonacci-like number with initial terms $t_0$, $t_1$, $t_2$, $t_3$
				\begin{eqnarray}
					\mathfrak{F}_n^{(4)}&=&
					\left[\displaystyle \frac{(\lambda_1^3 - \lambda_1^2 - \lambda_1 - 1)t_0 + (\lambda_1^2 - \lambda_1 - 1)t_1 + (\lambda_1 - 1)t_2 + t_3}{(\lambda_1 - \lambda_2)(\lambda_1 - \lambda_3)(\lambda_1 - \lambda_4)}\right]\lambda_1^n \notag\\
					&+&
					\left[\displaystyle \frac{(\lambda_2^3 - \lambda_2^2 - \lambda_2 - 1)t_0 + (\lambda_2^2 - \lambda_2 - 1)t_1 + (\lambda_2 - 1)t_2 + t_3}{(\lambda_2 - \lambda_1)(\lambda_2 - \lambda_3)(\lambda_2 - \lambda_4)}\right]\lambda_2^n \notag\\
					&+&
					\left[\displaystyle \frac{(\lambda_3^3 - \lambda_3^2 - \lambda_3 - 1)t_0 + (\lambda_3^2 - \lambda_3 - 1)t_1 + (\lambda_3 - 1)t_2 + t_3}{(\lambda_3 - \lambda_1)(\lambda_3 - \lambda_2)(\lambda_3 - \lambda_4)}\right]\lambda_3^n \notag\\
					&+&
					\left[\displaystyle \frac{(\lambda_4^3 - \lambda_4^2 - \lambda_4 - 1)t_0 + (\lambda_4^2 - \lambda_4 - 1)t_1 + (\lambda_4 - 1)t_2 + t_3}{(\lambda_4 - \lambda_1)(\lambda_4 - \lambda_2)(\lambda_4 - \lambda_3)}\right]\lambda_4^n \notag
				\end{eqnarray}
			where $\lambda_1,\lambda_2,\lambda_3,\lambda_4$ are the roots of the polynomial $P_3(\lambda)=\lambda^4-\lambda^3-\lambda^2-\lambda-1$.
\end{itemize}

Each of the formula above were rewritten in summation notation and summarized in the table below to find a recognizable pattern.

{\small $$\begin{array}{ccc}
			\hline\\
		\textbf{Name of Sequence} 			&\textbf{Initial Terms}		&  \textbf{nth Term of the Sequence} \\[7pt]  
		\text{Fibonacci-like}			&	t_0,t_1				&  \mathfrak{F}_n^{(2)}=\displaystyle \sum_{m=1}^{2} \left[ \frac{\displaystyle\sum_{p=1}^{2}\left(\lambda_m^{2-p}-\displaystyle\sum_{i=1}^{2-p}\lambda_m^{(2-p)-i}\right)t_{p-1}}{\displaystyle \prod_{j=1, j\neq m}^{2}(\lambda_m - \lambda_j)} \right] \lambda_m^n \\
		\text{Tribonacci-like}			&	t_0,t_1,t_2		&  \mathfrak{F}_n^{(3)}=\displaystyle \sum_{m=1}^{3} \left[ \frac{\displaystyle\sum_{p=1}^{3}\left(\lambda_m^{3-p}-\displaystyle\sum_{i=1}^{3-p}\lambda_m^{(3-p)-i}\right)t_{p-1}}{\displaystyle \prod_{j=1, j\neq m}^{3}(\lambda_m - \lambda_j)} \right] \lambda_m^n \\
		\text{Tetrabonacci-like}		&	t_0,t_1,t_2,t_3		&  \mathfrak{F}_n^{(4)}=\displaystyle \sum_{m=1}^{4} \left[ \frac{\displaystyle\sum_{p=1}^{4}\left(\lambda_m^{4-p}-\displaystyle\sum_{i=1}^{4-p}\lambda_m^{(4-p)-i}\right)t_{p-1}}{\displaystyle \prod_{j=1, j\neq m}^{4}(\lambda_m - \lambda_j)} \right] \lambda_m^n \\
		\vdots						& 	\vdots 					& 	\vdots  \\
		\text{$k$-generalized Fibonacci-like} & t_0, t_1, \ldots, t_{k-1}  & \mathfrak{F}_n^{(k)}=\displaystyle\sum_{m=1}^{k} \left[ \frac{\displaystyle\sum_{p=1}^{k}\left(\lambda_m^{k-p}-\displaystyle\sum_{i=1}^{k-p}\lambda_m^{(k-p)-i}\right)t_{p-1}}{\displaystyle \prod_{j=1, j\neq m}^{k}(\lambda_m - \lambda_j)} \right] \lambda_m^n \\[35pt] \hline
	\end{array}$$}

By observing how the pattern goes, we propose the following theorem.

\begin{theorem}
The $nth$ $k$-generalized Fibonacci-like number, denoted by $(\mathfrak{F}_n^{(k)})$, with $k$ arbitrary initial terms, $t_0$, $t_1$,$\ldots$, $t_{k-2}$, $t_{k-1}$, is

\begin{equation}
	\mathfrak{F}_n^{(k)}=\sum_{m=1}^{k} \left[\displaystyle \frac{\displaystyle\sum_{p=1}^{k}\left(\lambda_m^{k-p}-\displaystyle\sum_{i=1}^{k-p}\lambda_m^{(k-p)-i}\right)t_{p-1}}{\displaystyle \prod_{j=1, j\neq m}^{k}(\lambda_m - \lambda_j)} \right] \lambda_m^n, \hspace{2mm}(n \geq k \geq 2)
\end{equation}
where $\lambda_1, \lambda_2,...,\lambda_k$ are the roots of the characteristic polynomial $P(\lambda)=\lambda^k - \lambda^{k-1}- ...-\lambda-1$. 	
\end{theorem}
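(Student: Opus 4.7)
The plan is to prove Theorem 2 by showing that the right-hand side of (21) satisfies both the $k$-th order linear recurrence $\mathfrak{F}_n^{(k)} = \sum_{i=1}^{k} \mathfrak{F}_{n-i}^{(k)}$ and the $k$ initial conditions $\mathfrak{F}_j^{(k)} = t_j$ for $0 \le j \le k-1$. Because these two properties uniquely determine a sequence, matching both forces equality with $\mathfrak{F}_n^{(k)}$ for every $n \ge 0$, in particular for the range $n \ge k$ asserted in the statement.

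The recurrence verification is immediate: each root $\lambda_m$ of $P(\lambda) = \lambda^k - \lambda^{k-1} - \cdots - 1$ satisfies $\lambda_m^n = \lambda_m^{n-1} + \cdots + \lambda_m^{n-k}$ for $n \ge k$, so any fixed linear combination $\sum_{m=1}^{k} c_m \lambda_m^n$ inherits the recurrence. The coefficients appearing in (21) depend on the roots and on the data $t_0,\dots,t_{k-1}$, but not on $n$, so the right-hand side satisfies the recurrence.

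The real work is verifying the initial conditions. By linearity in the $t_{p-1}$, the right-hand side of (21) evaluated at $n = j$ equals $\sum_{p=1}^{k} t_{p-1}\, h_{j,p}$, where
\[
h_{j,p} \;=\; \sum_{m=1}^{k} \frac{\lambda_m^{j+k-p} - \sum_{i=1}^{k-p} \lambda_m^{j+k-p-i}}{\prod_{j'\neq m}(\lambda_m - \lambda_{j'})}.
\]
Invoking Theorem 1, together with its natural extension to all nonnegative exponents (so that $\sum_m \lambda_m^r/\prod_{j'\neq m}(\lambda_m - \lambda_{j'}) = F_r^{(k)}$ for every $r \ge 0$, with $F_0^{(k)} = \cdots = F_{k-2}^{(k)} = 0$ and $F_{k-1}^{(k)} = 1$; this extension holds because both sides satisfy the same $k$-term recurrence and agree for $r \ge k$), the expression collapses to
\[
h_{j,p} \;=\; F_{j+k-p}^{(k)} - \sum_{i=1}^{k-p} F_{j+k-p-i}^{(k)},
\]
and the task reduces to proving $h_{j,p} = \delta_{j,p-1}$ for all $0 \le j \le k-1$ and $1 \le p \le k$.

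The main obstacle is this final identity, which I would dispatch by a case split on $r := j + k - p \in [0, 2k-2]$. For $r \le k-2$, every $F$-value in sight has index at most $k-2$ and therefore vanishes, giving $h_{j,p} = 0$; this matches $\delta_{j,p-1}$ since $j \ne p-1$ in this range. For $r = k-1$ (equivalently $j = p - 1$), $F_{k-1}^{(k)} = 1$ while the subtracted terms have indices in $[j,k-2]$ and all vanish, yielding $h_{j,p} = 1$. For $r \ge k$, expanding the leading term via $F_r^{(k)} = \sum_{i=1}^{k} F_{r-i}^{(k)}$ cancels the subtracted sum up to its last $p$ terms, leaving $\sum_{i=k-p+1}^{k} F_{r-i}^{(k)}$; the remaining indices run from $j-1$ down to $j-p$ and, since $j \le k-1$ and $j \ge p$ in this case, all fall within $[0, k-2]$, so the tail vanishes as well. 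Combining the three cases gives $h_{j,p} = \delta_{j,p-1}$, whence $\mathfrak{F}_j^{(k)} = t_j$ and the theorem follows.
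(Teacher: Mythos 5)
Your proposal is correct, but it takes a genuinely different route from the paper. The paper fixes $k$ and runs strong induction on $n$: its base case verifies the formula only at $n=k$ (grouping the expansion by the $t_i$ and showing each coefficient equals $1$ via Theorem 1 and the defining recurrence), and its inductive step adds the $k$ preceding instances of the formula and uses $\lambda_m = 1+\lambda_m^{-1}+\cdots+\lambda_m^{-(k-1)}$ to raise the exponent. You instead characterize $\mathfrak{F}_n^{(k)}$ as the unique solution of the order-$k$ recurrence with prescribed values at $n=0,\dots,k-1$, dispose of the recurrence in one line from $P(\lambda_m)=0$, and concentrate the work in the identity $h_{j,p}=\delta_{j,p-1}$, which you reduce to bookkeeping with $F$-values after extending Theorem 1 downward to exponents $0\le r\le k-1$ (your backward-recurrence justification is legitimate because the constant term of $P$ is $-1\neq 0$); your three-case analysis on $r=j+k-p$ checks out. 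What your route buys is worth noting: you verify all $k$ initial slots, not just $n=k$, and this is actually needed — the paper's inductive step at $l$ near $k$ substitutes the closed form at indices $l-(k-1),\dots,l-1$ that lie below $k$ and are never covered by its base case, so your argument is self-contained where the paper's, read literally, leaves a small gap. The price you pay is the auxiliary lemma extending Theorem 1 below index $k$, which the paper avoids by invoking Theorem 1 only at indices $\ge k$ and treating $F_0^{(k)},\dots,F_{k-1}^{(k)}$ as defined initial data rather than as values of the closed form.
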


\begin{proof}
We will use the 2nd Principle of Mathematical Induction (Strong Induction) on $n$. Let $k$ be fixed.

\begin{enumerate}[(a)]
	\item \textbf{Base Case:}
	
		For $n=k$, WNTS that
							\begin{equation}
								\mathfrak{F}_k^{(k)}=t_0+t_1+t_2+ \ldots +t_{k-2}+t_{k-1}. 
							\end{equation}
		So, 
		\begin{equation}
			\mathfrak{F}_k^{(k)}=\sum_{m=1}^{k} \left[\displaystyle \frac{\displaystyle\sum_{p=1}^{k}\left(\lambda_m^{k-p}-\displaystyle\sum_{i=1}^{k-p}\lambda_m^{(k-p)-i}\right)t_{p-1}}{\displaystyle \prod_{j=1, j\neq m}^{k}(\lambda_m - \lambda_j)} \right] \lambda_m^k \notag
		\end{equation}

		\begin{eqnarray}
			\mathfrak{F}_k^{(k)}&=&\left[\displaystyle \frac{(\lambda_1^{k-1}- \ldots -1)t_0 + (\lambda_1^{k-2} - \ldots - 1)t_1 + \ldots +(\lambda_1-1)t_{k-2} + t_{k-1}}
			{(\lambda_1-\lambda_2)(\lambda_1-\lambda_3) \ldots (\lambda_1-\lambda_k)}\right] \lambda_1^k +
			\notag\\
			&& \left[\displaystyle \frac{(\lambda_2^{k-1} - \ldots - 1)t_0 + (\lambda_2^{k-2}- \ldots -1)t_1 + \ldots +(\lambda_2-1)t_{k-2} + t_{k-1}}
			{(\lambda_2-\lambda_1)(\lambda_2-\lambda_3) \ldots (\lambda_2-\lambda_k)}\right] \lambda_2^k +\ldots+
			\notag\\
			&&\left[\displaystyle \frac{(\lambda_{k-1}^{k-1}- \ldots -1)t_0 + (\lambda_{k-1}^{k-2}- \ldots -1)t_1 + \ldots +(\lambda_{k-1}-1)t_{k-2} + t_{k-1}}
			{(\lambda_{k-1}-\lambda_1)(\lambda_{k-1}-\lambda_2) \ldots (\lambda_{k-1}-\lambda_k)}\right] \lambda_{k-1}^k +
			\notag\\
			&&\left[\displaystyle \frac{(\lambda_k^{k-1}- \ldots -1)t_0 + (\lambda_k^{k-2}- \ldots -1)t_1 + \ldots +(\lambda_k-1)t_{k-2} + t_{k-1}}
			{(\lambda_k-\lambda_1)(\lambda_k-\lambda_2) \ldots (\lambda_k-\lambda_{k-1})}\right]\lambda_k^k
			\notag
		\end{eqnarray}
		Next, we need to regroup the RHS in terms of $t_0, t_1,\ldots, t_{k-2}, t_k-1$ and show that the coefficients of $t_0, t_1, \ldots, t_{k-2}, t_k-1$ are all equal to $1$.

		Solving for the coefficient of $t_{k-1}$:
			\begin{eqnarray}
				t_{k-1}&:& 
				\displaystyle\frac{\lambda_1^k}{(\lambda_1-\lambda_2)(\lambda_1-\lambda_3) \ldots (\lambda_1-\lambda_k)}+
				\displaystyle\frac{\lambda_2^k}{(\lambda_2-\lambda_1)(\lambda_2-\lambda_3) \ldots (\lambda_2-\lambda_k)}\notag\\
				&& +\ldots+ \displaystyle\frac{\lambda_{k-1}^k}{(\lambda_{k-1}-\lambda_1)(\lambda_{k-1}-\lambda_2) \ldots (\lambda_{k-1}-\lambda_k)}\notag\\
				&&+\displaystyle\frac{\lambda_k^k}{(\lambda_k-\lambda_1)(\lambda_k-\lambda_2) \ldots (\lambda_k-\lambda_{k-1})}
				\notag
			\end{eqnarray}
		But from (7), evaluating at $n=k$, we deduce that the coefficient of $t_{k-1}$ is $F_k^{(k)}$.
		
		For the coefficient of $t_{k-2}$, we have
		\begin{eqnarray}
			t_{k-2}	&:& \frac{(\lambda_1-1)\lambda_1^k}{(\lambda_1-\lambda_2)(\lambda_1-\lambda_3) \ldots (\lambda_1-\lambda_k)}
						+\frac{(\lambda_2-1)\lambda_2^k}{(\lambda_2-\lambda_1)(\lambda_2-\lambda_3) \ldots (\lambda_2-\lambda_k)} +\ldots+ \notag\\
					& &	\frac{(\lambda_{k-1}-1)\lambda_{k-1}^k}{(\lambda_{k-1}-\lambda_1)(\lambda_{k-1}-\lambda_2) \ldots (\lambda_{k-1}-\lambda_k)}
						+\frac{(\lambda_k-1)\lambda_k^k}{(\lambda_k-\lambda_1)(\lambda_k-\lambda_2) \ldots (\lambda_k-\lambda_{k-1})}\notag\\
					&=&	\frac{\lambda_1^{k+1}}{(\lambda_1-\lambda_2)(\lambda_1-\lambda_3) \ldots (\lambda_1-\lambda_k)}+
						\frac{\lambda_2^{k+1}}{(\lambda_2-\lambda_1)(\lambda_2-\lambda_3) \ldots (\lambda_2-\lambda_k)} +\ldots+\notag\\
					& &	\frac{\lambda_{k-1}^{k+1}}{(\lambda_{k-1}-\lambda_1)(\lambda_{k-1}-\lambda_2) \ldots (\lambda_{k-1}-\lambda_k)} 				
						+\frac{\lambda_k^{k+1}}{(\lambda_k-\lambda_1)(\lambda_k-\lambda_2) \ldots 	(\lambda_k-\lambda_{k-1})}\notag\\
					& &	-\left[\frac{\lambda_1^k}{(\lambda_1-\lambda_2)(\lambda_1-\lambda_3) \ldots (\lambda_1-\lambda_k)} 
						+\frac{\lambda_2^k}{(\lambda_2-\lambda_1)(\lambda_2-\lambda_3) \ldots (\lambda_2-\lambda_k)}+\ldots+ \right.\notag\\
					& &	\frac{\lambda_{k-1}^k}{(\lambda_{k-1}-\lambda_1)(\lambda_{k-1}-\lambda_2) \ldots (\lambda_{k-1}-\lambda_k)} \left. + 	
						\frac{\lambda_k^k}{(\lambda_k-\lambda_1)(\lambda_k-\lambda_2) \ldots (\lambda_k-\lambda_{k-1})}\right]\notag\\
					&=&	F_{k+1}^{(k)}-F_k^{(k)}.\notag
		\end{eqnarray} 
	In a similar fashion, the coefficients of $t_{i}$s, for $i=1, \ldots, k-1$, were obtained and listed as follows:
		\begin{eqnarray}
			t_{k-1}&:&F_k^{(k)}\notag\\
			t_{k-2}&:&F_{k+1}^{(k)}-F_k^{(k)}\notag\\
			t_{k-3}&:&F_{k+2}^{(k)}-F_{k+1}^{(k)}-F_k^{(k)}\notag\\
			& & \hspace{10mm} \vdots \hspace{10mm} \vdots \notag\\
			t_2&:& F_{2k-3}^{(k)}-F_{2k-4}^{(k)}-...-F_{k+1}^{(k)}-F_k^{(k)}\notag\\
			t_1&:& F_{2k-2}^{(k)}-F_{2k-3}^{(k)}-...-F_{k+1}^{(k)}-F_k^{(k)}\notag\\
			t_0&:& F_{2k-1}^{(k)}-F_{2k-2}^{(k)}-...-F_{k+1}^{(k)}-F_k^{(k)}\notag
		\end{eqnarray}
	By using the definition of $k$-generalized-Fibonacci, 
		\begin{eqnarray}
			F_k^{(k)}&=& F_{k-1}^{(k)}+F_{k-2}^{(k)}+\ldots+F_2^{(k)}+F_1^{(k)}+F_0^{(k)}\notag\\
			F_{k+1}^{(k)}&=& F_{k}^{(k)}+F_{k-1}^{(k)}+\ldots+F_3^{(k)}+F_2^{(k)}+F_1^{(k)}\notag\\
			F_{k+2}^{(k)}&=& F_{k+1}^{(k)}+F_{k+1}^{(k)}+\ldots+F_4^{(k)}+F_3^{(k)}+F_2^{(k)}\notag\\
			& & \hspace{10mm} \vdots \hspace{10mm} \vdots \notag\\
			F_{2k-3}^{(k)}&=&F_{2k-4}^{(k)}+F_{2k-5}^{(k)}+\ldots+F_{k-1}^{(k)}+F_{k-2}^{(k)}+F_{k-3}^{(k)}\notag\\
			F_{2k-2}^{(k)}&=&F_{2k-3}^{(k)}+F_{2k-4}^{(k)}+\ldots+F_k^{(k)}+F_{k-1}^{(k)}+F_{k-2}^{(k)}\notag\\
			F_{2k-1}^{(k)}&=&F_{2k-2}^{(k)}+F_{2k-3}^{(k)}+\ldots+F_{k+1}^{(k)}+F_k^{(k)}+F_{k-1}^{(k)}\notag
		\end{eqnarray}
	So by substitution, the coefficients now become,
		\begin{eqnarray}
			t_{k-1}&:&F_{k-1}^{(k)}+F_{k-2}^{(k)}+\ldots+F_2^{(k)}+F_1^{(k)}+F_0^{(k)}\notag\\
			t_{k-2}&:&F_{k-1}^{(k)}+F_{k-2}^{(k)}+\ldots+F_2^{(k)}+F_1^{(k)}\notag\\
			t_{k-3}&:&F_{k-1}^{(k)}+F_{k-2}^{(k)}+\ldots+F_2^{(k)}\notag\\
			& & \hspace{10mm} \vdots \hspace{10mm} \vdots \notag\\
			t_2&:& F_{k-1}^{(k)}+F_{k-2}^{(k)}+F_{k-3}^{(k)}\notag\\
			t_1&:& F_{k-1}^{(k)}+F_{k-2}^{(k)}\notag\\
			t_0&:& F_{k-1}^{(k)}\notag
		\end{eqnarray}
	But these values are the initial terms of our $k$-generalized-Fibonacci sequence, $F_0^{(k)}=F_1^{(k)}=...=F_{k-2}^{(k)}=0$ and $F_{k-1}^{(k)}=1$. Hence, we conclude that the coefficients of $t_0, t_1, \ldots, t_{k-2}, t_{k-1}$ are all 1.
		
		\item \textbf{Inductive Step:}
		
	Now, let us suppose that $\mathfrak{F}_n^{(k)}$ is true for $n=k,k+1,\ldots,l-1$ until $n=l$, WNTS that $\mathfrak{F}_n^{(k)}$ is also true for $n=l+1$, that is,
			\begin{equation}
				\mathfrak{F}_{l+1}^{(k)}=\sum_{m=1}^{k} \left[\displaystyle \frac{\displaystyle\sum_{p=1}^{k}\left(\lambda_m^{k-p}-\displaystyle\sum_{i=1}^{k-p}\lambda_m^{(k-p)-i}\right)t_{p-1}}{\displaystyle \prod_{j=1, j\neq m}^{k}(\lambda_m - \lambda_j)} \right] \lambda_m^{l+1} \notag
			\end{equation}
		For $n=l$, we have 
			\begin{equation}
				\mathfrak{F}_l^{(k)}=\sum_{m=1}^{k} \left[\displaystyle \frac{\displaystyle\sum_{p=1}^{k}\left(\lambda_m^{k-p}-\displaystyle\sum_{i=1}^{k-p}\lambda_m^{(k-p)-i}\right)t_{p-1}}{\displaystyle \prod_{j=1, j\neq m}^{k}(\lambda_m - \lambda_j)} \right] \lambda_m^l
			\end{equation}
		Adding $\mathfrak{F}_{l-(k-1)}^{(k)}, \mathfrak{F}_{l-(k-2)}^{(k)}, \ldots,\mathfrak{F}_{l-2}^{(k)}, \mathfrak{F}_{l-1}^{(k)}$ both sides of (24), we have the equation
				\begin{equation}
				\mathfrak{F}_{l+1}^{(k)} = \displaystyle\sum_{m=1}^{k} \left[\frac{\displaystyle\sum_{p=1}^{k}\left(\lambda_m^{k-p}-\displaystyle\sum_{i=1}^{k-p}\lambda_m^{(k-p)-i}\right)t_{p-1}}{\displaystyle \prod_{j=1, j\neq m}^{k}(\lambda_m - \lambda_j)} \right] \lambda_m^l +\mathfrak{F}_{l-(k-1)}^{(k)}+\mathfrak{F}_{l-(k-2)}^{(k)}+ \ldots +\mathfrak{F}_{l-1}^{(k)}
				\end{equation}
		Note that the LHS of (25) simplifies to $\mathfrak{F}_{l+1}^{(k)}$ by the recursive definition of $k$-generalized Fibonacci sequence. Simplifying the RHS of (25),

{\small	\begin{eqnarray}
			\mathfrak{F}_{l+1}^{(k)} &=&\sum_{m=1}^{k} \left[\displaystyle\frac{\displaystyle\sum_{p=1}^{k}\left(\lambda_m^{k-p}-\displaystyle\sum_{i=1}^{k-p}\lambda_m^{(k-p)-i}\right)t_{p-1}}{\displaystyle \prod_{j=1, j\neq m}^{k}(\lambda_m - \lambda_j)} \right] \lambda_m^l + \sum_{m=1}^{k} \left[\displaystyle\frac{\displaystyle\sum_{p=1}^{k}\left(\lambda_m^{k-p}-\displaystyle\sum_{i=1}^{k-p}\lambda_m^{(k-p)-i}\right)t_{p-1}}{\displaystyle \prod_{j=1, j\neq m}^{k}(\lambda_m - \lambda_j)} \right] \lambda_m^{l-1} \notag \\
			& & + \ldots + \sum_{m=1}^{k} \left[\displaystyle\frac{\displaystyle\sum_{p=1}^{k}\left(\lambda_m^{k-p}-\displaystyle\sum_{i=1}^{k-p}\lambda_m^{(k-p)-i}\right)t_{p-1}}{\displaystyle \prod_{j=1, j\neq m}^{k}(\lambda_m - \lambda_j)} \right] \lambda_m^{l-(k-1)}\notag\\
			&=&\sum_{m=1}^{k} \left[\displaystyle\frac{\displaystyle\sum_{p=1}^{k}\left(\lambda_m^{k-p}-\displaystyle\sum_{i=1}^{k-p}\lambda_m^{(k-p)-i}\right)t_{p-1}}{\displaystyle \prod_{j=1, j\neq m}^{k}(\lambda_m - \lambda_j)} \right][\lambda_m^l+\lambda_m^{l-1}+ \ldots + +\lambda_m^{l-(k-1)}]\notag\\
			\mathfrak{F}_{l+1}^{(k)} &=&\sum_{m=1}^{k} \left[\displaystyle\frac{\displaystyle\sum_{p=1}^{k}\left(\lambda_m^{k-p}-\displaystyle\sum_{i=1}^{k-p}\lambda_m^{(k-p)-i}\right)t_{p-1}}{\displaystyle \prod_{j=1, j\neq m}^{k}(\lambda_m - \lambda_j)} \right](\lambda_m^l)[1+\lambda_m^{-1}+ \ldots +\lambda_m^{-(k-1)}]
		\end{eqnarray}}
	
		But $\lambda_m$, $m=1, 2, ..., k$, is a root of the characteristic polynomial, $P(\lambda) = \lambda^k - \lambda^{k-1}-\ldots -\lambda_m - 1$. So, 
					\begin{eqnarray}
						\lambda^k_m - \lambda^{k-1}_m - \ldots - \lambda_m - 1 = 0 
						&\Longleftrightarrow&  \lambda^k_m = \lambda^{k-1}_m  + \lambda^{k-2}_m \ldots + 1 \notag\\ 
						&\Longleftrightarrow&  \lambda_m=1+\lambda^{-1}_m+ \ldots +\lambda^{-(k-1)}_m
					\end{eqnarray}
		By substituting (27) in (26), we have
		\begin{eqnarray}
			\mathfrak{F}_{l+1}^{(k)} &=&\displaystyle\sum_{m=1}^{k} \left[\displaystyle\frac{\displaystyle\sum_{p=1}^{k}\left(\lambda_m^{k-p}-\displaystyle\sum_{i=1}^{k-p}\lambda_m^{(k-p)-i}\right)t_{p-1}}{\displaystyle \prod_{j=1, j\neq m}^{k}(\lambda_m - \lambda_j)} \right](\lambda_m^l)(\lambda_m) \notag\\
			&=& \displaystyle\sum_{m=1}^{k} \left[\displaystyle\frac{\displaystyle\sum_{p=1}^{k}\left(\lambda_m^{k-p}-\displaystyle\sum_{i=1}^{k-p}\lambda_m^{(k-p)-i}\right)t_{p-1}}{\displaystyle \prod_{j=1, j\neq m}^{k}(\lambda_m - \lambda_j)} \right](\lambda_m^{l+1})
		\end{eqnarray}
		as desired. Hence, we have shown that $\mathfrak{F}_{n}^{(k)}$ is also true for $n=l+1$. Therefore, the formula is true for $n\geq k$.
\end{enumerate}
\end{proof}

\section*{Conclusion}

\hspace{5.5mm} Through concepts involving eigenvectors and eigenvalues of an operator, we have shown the formula for the $nth$ term of some variations of the Fibonacci sequence. Recognizing patterns could be used to develop general ones. In this paper, a formula for finding the $nth$ term of $k$-generalized Fibonacci-like sequence was developed and proven via Strong Induction. Other sequences could be solved using the method presented.

\pagebreak
\section*{References}

Bacani, J.B., Rabago, J.F.T. (2015). On Generalized Fibonacci Numbers. \textit{Hikari, Ltd.,} 

		\hspace{5mm} \textit{Applied Mathematical Sciences} arXiv:1503.05305v1
		\vspace{2mm}
		\\
Dresden, G.P.B (2014). A Simplified Binet Formula for $k$-Generalized Fibonacci Numbers.

		\hspace{5mm}\textit{Journal of Integer Sequences}, Vol. 17. Article 14.4.7.
		\vspace{2mm}
		\\
Koshy, T. (2007). Fibonacci and Lucas Numbers. \textit{Elementary Number Theory with} 

		\hspace{5mm} \textit{Applications, Second Edition}. Academic Press, an imprint of Elsevier, Inc,
		
		\hspace{5mm} 30 Corporate Drive, Suite 400, Burlington, MA 01803, USA
		\vspace{2mm}
		\\
Kuhapatanakul, K., Sukruan, L. (2014). n-Tribonacci Triangles and Application. \textit{International}

		\hspace{5mm} \textit{Journal of Mathematical Education in Science and Technology}, 45, 1068–1075.
		\vspace{2mm}
		\\
Meinke, A.M. (2011). \textit{Fibonacci Numbers and Associated Matrices}.
		\vspace{2mm}
		\\
Kuhapatanakul, K., Anantakitpaisal, A. (2017). The k-nacci triangle and applications.

		\hspace{5mm} \textit{Cogent Mathematics, 4:1333293}
		\vspace{2mm}
		\\
Natividad, L., Policarpio, P. (2013). A Novel Formula in Solving Tribonacci-like Sequence.

		\hspace{5mm} \textit{International Center for Scientific Research and Studies (ICSRS) Publication}, Gen.
		
		\hspace{5mm} Math. Notes, Vol. 17, No. 1, July, 2013, pp. 82-87. 
		\vspace{2mm}
		\\
Noe, T., Piezas, T. and Weisstein, E. (2012). \textit{Tribonacci Number}.

		\hspace{5mm} Available at: http://mathworld.wolfram.com/ TribonacciNumber.html
		
		\hspace{5mm} (Accessed: 12 August 2019)
		\vspace{2mm}
		\\
Rosen, K.H. (2012). Sequences and Summations. \textit{Discrete Mathematics and Its Applications,}

		\hspace{5mm} \textit{Seventh Edition}. McGraw-Hill, a business unit of The McGraw-Hill Companies, Inc.
		
		\hspace{5mm} 1221 Avenue of the Americas, New York, NY 10020
		\vspace{2mm}
		\\
Weisstein, E.W. \textit{Vieta's Formulas}. Available at: http://mathworld.wolfram.com/

		\hspace{5mm} VietasFormulas.html (Accessed: 15 September 2019) 
		\vspace{2mm}
		\\
Wolfram, D.A. (1998). Solving Generalized Fibonacci Recurrences. \textit{The Fibonacci Quarterly},

		\hspace{5mm} Vol. 36, 129-145.
		\vspace{2mm}
		\\
Wong, C. K., Maddocks, T. W. (1975). A Generalized Pascal’s Triangle. \textit{The Fibonacci}

		\hspace{5mm} \textit{Quarterly}, Vol. 13 No. 2, 134–136.

\end{document}